%
%
%
%
%
%
%
\RequirePackage{fix-cm}
\documentclass[smallextended]{svjour3a}       
\smartqed  
\usepackage{graphicx}
\usepackage{hyperref}
\usepackage{amssymb}
\usepackage{amsmath}

\usepackage{mathptmx}      
%
%
%
%
\begin{document}

\title{Kripke Semantics for Fuzzy Logics\thanks{This a part of the Ph.D. thesis of the first author written under the supervision of the second author at the University of Tabriz.}
}


\author{Parvin Safari          \and
        Saeed Salehi 
}


\institute{P. Safari \at
Department of Mathematics, University of Tabriz, 29 Bahman Blvd., P.O.Box~51666-17766, Tabriz, {\sc Iran}. \quad  \email{p$_-$safari@tabrizu.ac.ir}           
           \and
          S. Salehi \at
              Department of Mathematics, University of Tabriz, 29 Bahman Blvd., P.O.Box~51666-17766, Tabriz, {\sc Iran}.   \quad   \email{salehipour@tabrizu.ac.ir} \quad
              Web: \url{http://saeedsalehi.ir}
}

\date{10 December 2016}

\maketitle

\begin{abstract}
Kripke frames (and models)  provide a suitable semantics for sub-classical logics; for example Intuitionistic Logic (of Brouwer and Heyting) axiomatizes the reflexive and transitive Kripke frames (with persistent satisfaction relations), and the Basic Logic (of Visser) axiomatizes transitive Kripke frames (with persistent satisfaction relations). Here, we investigate whether Kripke frames/models could provide a semantics for fuzzy logics. For each axiom of the Basic Fuzzy Logic, necessary and sufficient conditions are sought  for Kripke frames/models which satisfy them. It turns out that the only fuzzy logics (logics containing the Basic Fuzzy Logic) which are sound and complete with respect to a class of Kripke frames/models are the extensions of the G\"{o}del Logic (or the super-intuitionistic logic of Dummett); indeed this logic is sound and strongly complete with respect to reflexive, transitive and connected (linear) Kripke frames (with persistent satisfaction relations). This provides a semantic characterization for the G\"{o}del Logic among (propositional) fuzzy logics.

\keywords{Fuzzy Logics \and The Basic Fuzzy Logic \and G\"{o}del Logic \and Dummet Logic \and Kripke Frames \and Soundness \and  Completeness \and Semantics.}
\subclass{03B52 \and 03F50 \and 03F55 \and 03A20.}
\end{abstract}

%

\newpage 

\section{Introduction and Preliminaries}
Kripke frames provide a semantics for modal logics and for some
sub-classical logics such as Intuitionistic logic (of Brouwer and
Heyting) and Basic Logic (of Visser). Visser Basic Logic is sound
and strongly complete with respect to transitive Kripke frames \cite{Visser}
and the Intuitionistic Logic is sound and strongly complete with
respect to reflexive and transitive Kripke frames \cite{Mints}. It could be
expected that a class of Kripke frames could provide a suitable
semantics for the Basic Fuzzy Logic (introduced in \cite{Hajek}).
For each axiom of this logic, all  the  Kripke frames/models that satisfy it will be investigated.
We
shall see that the only (fuzzy) logics which contain the Basic Fuzzy Logic  and are sound and strongly complete with
respect to a class of Kripke frames/models are extensions of the  G\"{o}del Logic, or
equivalently the Dummet Logic (cf. \cite{Dummett} and \cite{Bendova}). This logic can be
aximatized as the Intuitionistic Logic plus the  axiom
 $(\varphi\rightarrow\psi)\vee(\psi\rightarrow\varphi)$, and is
 sound and strongly complete with respect to reflexive, transitive,
 and connected Kripke frames (with persistent satisfaction relations).

\begin{definition}[Kripke Frames]\label{def-frame}
A {\em Kripke frame} is a directed graph, i.e., an ordered pair $\langle K,R\rangle$ where
$R\subseteq K^{2}$ is a binary relation on $K$.
In a Kripke frame $\langle K,R\rangle$ the members of
$K$ are called {\em nodes},  and the relation $R$ is called the  {\em
accessability} relation; if $kRk'$ then the node $k'$ is said
to be {\em accessible} from the node $k$.
 \hfill $\bigtriangleup\hspace{-0.7785em}\wedge$
 \end{definition}

\begin{definition}[Reflexivity \& Transitivity]\label{def-rtl}
A relation $R\subseteq K\times K$ is
\begin{itemize}
\item {\em reflexive}, when for any $ k\!\in\!K$, $kRk$ holds.
\item {\em transitive}, when for any $k,k',k''\!\!\in\!K$, if $kRk'$ and $k'Rk''$
hold then $kRk''$ holds.
\end{itemize}
\noindent A Kripke frame is called reflexive/transitive,  when the relation $R$ is so.
 \hfill $\bigtriangleup\hspace{-0.7785em}\wedge$
 \end{definition}

\begin{definition}[Transitive Closure]\label{def-rtclosed}
For  a binary relation $R\subseteq K\times K$ on $K$  and   a node $k\!\in\!K$, let
  $R^1[k]=R[k]=\{x\!\in\!K\mid kRx\}$ be   {\em the image of $\{k\}$ under $R$},  and let
  $R^2[k]=\{x\!\in\!K\mid \exists y\!\in\!K (kRyRx)\}$, and generally for any $n\!\in\!\mathbb{N}$  let the set
 $R^n[k]$ be defined by  $\{x\!\in\!K\mid \exists y_1,\ldots,y_{n-1}\!\in\!K (kRy_1Ry_2R\cdots Ry_{n-1}Rx)\}$. The {\em transitive closure} of $R$ on $\{k\}$ is then
 $R^+[k]=\bigcup_{n=1}^\infty R^n[k]$. Define also  $R^{++}[k]=\bigcup_{n=2}^\infty R^n[k]$.
 \hfill $\bigtriangleup\hspace{-0.7785em}\wedge$
 \end{definition}

\begin{definition}[Connectedness]\label{def-conn}
 A relation $R\subseteq K\times K$ is called {\em connected}, when for any $k\!\in\!K$ and any $k',k''\!\in\!R^+[k]$, either $k'Rk''$ or  $k''Rk'$  holds (cf. \cite{svejdar}).
  \hfill $\bigtriangleup\hspace{-0.775em}\wedge$
 \end{definition}

\begin{definition}[Syntax of Fuzzy Logic]\label{def-syntx}
Formulas of {\em Propositional Fuzzy Logic} are built from the constant $\underline{\bot\!\!\!\!\!\!\bot}$ (for the falsity) and the connectives $\&,\rightarrow$ (for conjunction and implication) together with a countably infinite set of atoms, denoted $\textsf{Atoms}$.
 \hfill $\bigtriangleup\hspace{-0.7785em}\wedge$
 \end{definition}

Let us note that then the negation of a formula $\varphi$ becomes $\varphi\rightarrow\underline{\bot\!\!\!\!\!\!\bot}$ in this language.

\begin{definition}[Kripke Models]\label{def-modele}
 A {\em Kripke model} is a triple $\mathcal{K}=\langle K,R,\vDash\rangle$ where $\langle K,R\rangle$
is a Kripke frame and $\vDash\,\subseteq K\times\textsf{Atoms}$
is a {\em satisfaction} relation.
The satisfaction
relation  can be extended to all the (propositional) formulas,
i.e., to $\vDash\;\subseteq K\times\textsf{Formulas}$, as follows (\textsf{Formulas} is the set of all formulas):
\begin{itemize}
\item No node satisfies $\underline{\bot\!\!\!\!\!\!\bot}$, i.e.,  $k\nvDash\underline{\bot\!\!\!\!\!\!\bot}$ for all
    $k\!\in\!K$.
\item The conjunction is satisfied if and only if each
 component is satisfied, i.e., \\
$k\vDash (\varphi\&\psi) \iff k\vDash\varphi \textrm{ and }
k\vDash\psi$.
\item The implication is satisfied if and only if whenever
    an accessible node satisfies the antecedent then it also
    satisfies the consequent, i.e., \\
    $k\vDash(\varphi\rightarrow\psi) \iff \textrm{ for all }
    k'\!\in\!K \big(\textrm{if }k\,R\,k' \textrm{ and }
     k'\vDash\varphi \textrm{ then } k'\vDash\psi\big)$

   \hspace{5.15em}   $\iff \forall k'\!\in\!R[k]\big(k'\vDash\varphi\longrightarrow k'\vDash\psi\big)$.
 \hfill $\bigtriangleup\hspace{-0.7785em}\wedge$
\end{itemize}
 \end{definition}

\begin{remark}[Truth]\label{remark}
The formula $\underline{\bot\!\!\!\!\!\!\bot}
\rightarrow\underline{\bot\!\!\!\!\!\!\bot}$ is always true, and holds in every node of any Kripke model (by definition). Let us denote it by $\overline{\top\!\!\!\!\!\!\top}$($=\underline{\bot\!\!\!\!\!\!\bot}
\rightarrow\underline{\bot\!\!\!\!\!\!\bot}$).
 \hfill $\bigtriangleup\hspace{-0.7785em}\wedge$
\end{remark}

\begin{definition}[Satisfaction]\label{def-sat}
A formula is {\em satisfied}  in a Kripke model when it is
satisfied in every node of that model. A Kripke {\em frame satisfies} a formula when every Kripke model with that frame satisfies the
formula.
A rule is said to be satisfied in a Kripke model when the
satisfaction of the premise(s)  of the rule in a node implies the satisfaction of its conclusion in that node.
A rule is said to be satisfied in a Kripke frame when
it is satisfied in every Kripke model with that frame.
 \hfill $\bigtriangleup\hspace{-0.7785em}\wedge$
 \end{definition}

\begin{definition}[Persistency]\label{def-mono}
A satisfaction relation $\vDash \, \subseteq K\times\textsf{Atoms}$ is called to be  {\em (atom) persistent with respect to $R\subseteq K\times K$} (cf.~\cite{svejdar}) when for any $k, k'\!\in\!K$ and $p\!\in\!\textsf{Atoms}$, if $k\vDash p$ and $kRk'$
then $k'\vDash p$; this property is called {\em atom persistency}. A satisfaction relation $\vDash \, \subseteq K\times\textsf{Formulas}$ is called to be {\em (formula) persistent  with respect to $R\subseteq K\times K$}
 when for any $k, k'\!\in\!K$ and $\varphi\!\in\!\textsf{Formulas}$, if $k\vDash \varphi$ and $kRk'$ then $k'\vDash\varphi$;
this property is called {\em formula persistency}.
 \hfill $\bigtriangleup\hspace{-0.7785em}\wedge$
\end{definition}

\paragraph{Convention.}
 The restriction of a relation $S\subseteq A\times B$ to a subset $C\subseteq A$ is denoted by $S|_C$, i.e., $S|_C=S\cap (C\times B)$.   \hfill $\bigtriangleup\hspace{-0.7785em}\wedge$

\begin{proposition}[Atom / Formula Persistency]\label{lem-per}
In a Kripke model $\langle K,R,\vDash\rangle$ if the restriction of $R$ to $R^+[k]$, i.e., $R|_{R^+[k]}$, is transitive for some node $k\!\in\!K$, then the atom persistency in (every node of) $R^+[k]$ implies the formula persistency \textup{(}in $R^+[k]$\textup{)}.
\end{proposition}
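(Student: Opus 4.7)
The plan is to fix $k\in K$ with $R|_{R^+[k]}$ transitive, and prove by induction on the complexity of $\varphi\in\textsf{Formulas}$ that for all $k_1,k_2\in R^+[k]$, if $k_1\vDash\varphi$ and $k_1 R k_2$, then $k_2\vDash\varphi$. The base case for atoms is exactly the atom persistency hypothesis, and the case $\varphi=\underline{\bot\!\!\!\!\!\!\bot}$ is vacuous since no node satisfies $\underline{\bot\!\!\!\!\!\!\bot}$. The conjunction case $\varphi=(\psi_1\&\psi_2)$ is immediate: $k_1\vDash\psi_1$ and $k_1\vDash\psi_2$ yield $k_2\vDash\psi_1$ and $k_2\vDash\psi_2$ by the induction hypothesis applied to each conjunct.

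The only nontrivial case, and thus the main obstacle, is the implication $\varphi=(\psi_1\rightarrow\psi_2)$. Assume $k_1,k_2\in R^+[k]$, $k_1 R k_2$, and $k_1\vDash(\psi_1\rightarrow\psi_2)$; I must show $k_2\vDash(\psi_1\rightarrow\psi_2)$. So take any $k_3\in R[k_2]$ with $k_3\vDash\psi_1$, and aim for $k_3\vDash\psi_2$. Here is where both hypotheses enter: since $k_2\in R^+[k]$ and $k_2 R k_3$, the definition of the transitive closure gives $k_3\in R^{++}[k]\subseteq R^+[k]$; in particular $k_1,k_2,k_3$ all lie in $R^+[k]$. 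Therefore the transitivity of $R|_{R^+[k]}$ applies to $k_1 R k_2 R k_3$ and yields $k_1 R k_3$. Since $k_1\vDash(\psi_1\rightarrow\psi_2)$ and $k_3\in R[k_1]$ and $k_3\vDash\psi_1$, the satisfaction clause for $\rightarrow$ gives $k_3\vDash\psi_2$, as required.

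The subtle point to watch is membership in $R^+[k]$: the transitivity hypothesis is only on the restriction, so at each step I need to verify that all three nodes $k_1,k_2,k_3$ lie in $R^+[k]$ before chaining $R$-steps. This verification is exactly what the definition of $R^+[k]$ provides, since one more $R$-step from a node in $R^+[k]$ remains in $R^+[k]$. Once this bookkeeping is done, the induction goes through and formula persistency on $R^+[k]$ follows.
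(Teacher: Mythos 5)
Your proof is correct and follows essentially the same route as the paper's: induction on the formula, with the atomic and conjunction cases immediate, and the implication case handled by chaining $k_1Rk_2Rk_3$ through the transitivity of $R|_{R^+[k]}$ to get $k_1Rk_3$ and then invoking the satisfaction clause at $k_1$. Your explicit check that $k_3$ stays inside $R^+[k]$ is a detail the paper leaves implicit, but the argument is the same.
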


\begin{proof}
 By induction on the formula $\varphi$ we show that for every $k',k''\!\in\!R^+[k]$ if $k'Rk''$ and $k'\vDash\varphi$ then $k''\vDash\varphi$:
 \begin{itemize}
\item For atomic formula $\varphi$, we have $k''\vDash\varphi$ by the assumption (also by definition, $k''\nvDash\underline{\bot\!\!\!\!\!\!\bot}$ always holds).
\item For $\varphi=\psi\&\theta$  ($\psi$ and $\theta$ are formulas)  by definition, $k'\vDash \psi$ and $k'\vDash \theta$, so by the induction hypothesis
$k''\vDash \psi$ and $k''\vDash \theta$, whence, $k''\vDash \psi\& \theta$ holds.
\item For $\varphi=\psi\rightarrow \theta$, we show that $k''\vDash \psi\rightarrow\theta$
  which is equivalent to  $$\forall
k'''\!\in\!R[k'']\,\big(k'''\vDash \psi\Longrightarrow k'''\vDash
\theta\big).$$  So, let us assume that (1)~$R|_{R^+[k]}$ is transitive, (2)~$k'\vDash\psi\rightarrow \theta$, (3)~$k'''\vDash \psi$, and  (4)~$k' R k'' R K'''$ for $k',k'',k'''\!\in\!R^+[k]$.  By (1) and (4), we have $k'Rk'''$, and so by (2) and (3), $k'''\vDash \theta$ holds. \hfill  $\maltese\hspace{-0.805em}\Box$
\end{itemize}
 \end{proof}

\begin{lemma}[Transitivity Lemma]\label{lem-trans}
 In a Kripke frame $\langle K,R\rangle$, if $R$ is
 reflexive  and for all $k\in K$, the restriction of $R$ to $R^{+}[k]$, i.e.,  $R|_{R^{+}[k]}$, is transitive,  then $R$ is transitive.
\end{lemma}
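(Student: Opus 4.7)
The plan is to derive transitivity of $R$ directly from the two hypotheses by choosing the base node of the transitive closure wisely. Suppose $aRb$ and $bRc$ are given; I want to conclude $aRc$. The idea is to apply the local transitivity hypothesis at $k=a$, so that everything takes place inside $R^{+}[a]$.

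First I would check membership: since $aRb$, we have $b\in R[a]\subseteq R^{+}[a]$, and since $aRbRc$, we also have $c\in R^{2}[a]\subseteq R^{+}[a]$. The crucial use of reflexivity is to place $a$ itself into $R^{+}[a]$: from $R$ reflexive we get $aRa$, so $a\in R[a]\subseteq R^{+}[a]$. Unpacking the convention $R|_{R^{+}[a]}=R\cap(R^{+}[a]\times K)$, a pair $(x,y)$ belongs to $R|_{R^{+}[a]}$ iff $x\in R^{+}[a]$ and $xRy$. With the observations above, both $(a,b)$ and $(b,c)$ lie in $R|_{R^{+}[a]}$.

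Now I would invoke the hypothesis that $R|_{R^{+}[a]}$ is transitive: from $(a,b),(b,c)\in R|_{R^{+}[a]}$ we conclude $(a,c)\in R|_{R^{+}[a]}$, which in particular yields $aRc$. Since $a,b,c$ were arbitrary witnesses to a potential failure of transitivity, this establishes that $R$ is transitive on all of $K$.

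The only subtle point — and the step I would flag as the key one — is the role of reflexivity: without it, there would be no way to guarantee $a\in R^{+}[a]$, and so the pair $(a,b)$ might not lie in $R|_{R^{+}[a]}$, making the local transitivity inapplicable. Everything else is bookkeeping about the transitive closure.
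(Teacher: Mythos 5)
Your proof is correct and is essentially the same argument as the paper's, which runs it in contrapositive form: the key step in both is using reflexivity to place the base node $a$ (the paper's $k_1$) inside $R^{+}[a]$ so that the pair $(a,b)$ falls within the restricted relation. Your direct phrasing and explicit unpacking of the restriction convention are fine.
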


\begin{proof}
If  $R$ were not transitive,  there would exist some $k_1,k_2,k_3\!\in\!K$ such that
  $k_1 R k_2$ and $k_2 R k_3$ but $k_1\!\!\!\not\!R k_3$.
  Now, trivially, $k_2,k_3\!\in\!R^{+}[k_1]$ and by the reflexivity of $R$ we also have $k_1\!\in\!R^{+}[k_1]$. But then $R|_{R^{+}[k_1]}$ is not transitive, contradiction!
\hfill $\maltese\hspace{-0.805em}\Box$ \end{proof}



\subsection{The Basic Fuzzy Logic}
The axiom of Basic Logic (BL) are (cf.~\cite{Hajek})

\begin{itemize}
\itemindent=2em
\item[$(A_1)$]
    $(\varphi\rightarrow\psi)\rightarrow[(\psi\rightarrow\theta)\rightarrow(\varphi\rightarrow\theta)]$
\item[$(A_2)$] $(\varphi\&\psi)\rightarrow\varphi$
\item[$(A_3)$] $(\varphi\&\psi)\rightarrow(\psi\&\varphi)$
\item[$(A_4)$]
    $(\varphi\&[\varphi\rightarrow\psi])\rightarrow(\psi\&[\psi\rightarrow\varphi])$
\item[$(A_5a)$]
    $[\varphi\rightarrow(\psi\rightarrow\theta)]\rightarrow[(\varphi\&\psi)\rightarrow\theta]$
\item[$(A_5b)$]
    $[(\varphi\&\psi)\rightarrow\theta]\rightarrow[\varphi\rightarrow(\psi\rightarrow\theta)]$
\item[$(A_6)$]
    $[(\varphi\rightarrow\psi)\rightarrow\theta]\rightarrow[([\psi\rightarrow\varphi]\rightarrow\theta)\rightarrow\theta]$
\item[$(A_7)$] $\underline{\bot\!\!\!\!\!\!\bot}   \rightarrow\varphi$
\end{itemize}

\noindent and its (only) rule is Modus Ponens

\begin{itemize}
\itemindent=2em
\item[($M\!P$)] $$\frac{A, \quad A\rightarrow
    B}{B}.$$
\end{itemize}


\section{Basic Fuzzy Logic and Kripke Frames/Models}
It immediately follows from the definitions that
\begin{proposition}[Universality of ${\bf A_2,A_3,A_7}$, and $\varphi\rightarrow\varphi
\&\varphi$]\label{trivial}
The axioms $(A_2), (A_3)$, $(A_7)$, and also the formula
$\varphi\rightarrow(\varphi\&\varphi)$ are satisfied in every
Kripke frame.\hfill $\maltese\hspace{-0.805em}\Box$
\end{proposition}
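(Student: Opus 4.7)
The plan is to unpack Definition~\ref{def-modele} at an arbitrary node $k$ of an arbitrary Kripke model $\langle K,R,\vDash\rangle$ and verify each of the four formulas directly. Every one of them has the form $\alpha\rightarrow\beta$, so by the clause for implication the task reduces to checking, for each $k'\!\in\!R[k]$, that $k'\vDash\alpha$ forces $k'\vDash\beta$. In each case the resulting condition is a pure tautology about the satisfaction relation at the single node $k'$, with no use of any property of $R$ whatsoever; this is precisely what makes the four statements hold in \emph{every} frame.

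For $(A_2)$ and $(A_3)$, I would apply the clause $k'\vDash\varphi\&\psi\iff k'\vDash\varphi\textrm{ and }k'\vDash\psi$: the projection $k'\vDash\varphi$ and the swap to $k'\vDash\psi\&\varphi$ are then immediate. For the formula $\varphi\rightarrow(\varphi\&\varphi)$, the right-hand side unfolds to ``$k'\vDash\varphi$ and $k'\vDash\varphi$'', which is just $k'\vDash\varphi$; hence the assumption gives the conclusion trivially. For $(A_7)$, I would invoke the stipulation $k'\nvDash\underline{\bot\!\!\!\!\!\!\bot}$ from the first clause of Definition~\ref{def-modele}, which makes the hypothesis of the implication vacuous at every $k'\!\in\!R[k]$, so $k'\vDash\underline{\bot\!\!\!\!\!\!\bot}\rightarrow\varphi$ holds automatically.

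I do not anticipate any real obstacle: there is no appeal to reflexivity, transitivity, persistency, or connectedness, and the satisfaction relation on atoms may be entirely arbitrary. The only point requiring minor attention is to keep the outer quantifier $\forall k'\!\in\!R[k]$ in place throughout, so that the verification at each accessible $k'$ is carried out independently; once this is done, the whole proposition collapses to four one-line propositional checks.
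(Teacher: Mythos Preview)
Your proposal is correct and matches the paper's approach exactly: the paper simply declares that the proposition ``immediately follows from the definitions'' and appends the qed symbol, so your write-up is just the spelled-out version of that one-line remark. The only cosmetic slip is in the $(A_7)$ clause, where the conclusion should read $k\vDash\underline{\bot\!\!\!\!\!\!\bot}\rightarrow\varphi$ rather than $k'\vDash\underline{\bot\!\!\!\!\!\!\bot}\rightarrow\varphi$; the argument itself is sound.
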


It can also be easily checked that the Modus Ponens $(M\!P)$ rule is satisfied in every reflexive Kripke frame. The converse is also true (cf. \cite[Proposition~5.1]{CJ}).

\begin{theorem}[${\bf MP}$ \& Reflexivity]\label{reflexive1}
The only rule of the Basic Fuzzy Logic ($M\!P$) is satisfied  in a Kripke
frame $\langle K,R\rangle$ if and only if  $R$ is reflexive.
\end{theorem}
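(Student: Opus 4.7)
The plan is to prove the two implications separately, with the reflexivity $\Rightarrow$ MP direction being immediate from the semantic clauses and the converse requiring a small countermodel construction.

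For the forward direction, I would fix an arbitrary Kripke model $\langle K,R,\vDash\rangle$ on a reflexive frame and a node $k\!\in\!K$, then suppose $k\vDash A$ and $k\vDash A\!\rightarrow\! B$. Unfolding the clause for implication from Definition~\ref{def-modele}, the second hypothesis says that every $k'\!\in\!R[k]$ satisfying $A$ also satisfies $B$. By reflexivity $k\!\in\!R[k]$, so instantiating at $k'=k$ and using $k\vDash A$ yields $k\vDash B$, which is exactly what it means for $(M\!P)$ to hold at $k$. Since $k$ was arbitrary, $(M\!P)$ is satisfied in the model, and since the model was arbitrary, it is satisfied in the frame.

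For the converse, I would argue contrapositively: starting from a non-reflexive frame $\langle K,R\rangle$, pick a witness $k_{0}\!\in\!K$ with $\neg(k_{0}Rk_{0})$ and exhibit a single Kripke model on this frame in which $(M\!P)$ fails at $k_{0}$. The natural choice is to take two distinct atoms $p,q\!\in\!\textsf{Atoms}$ and define $\vDash$ on atoms by letting $k_{0}\vDash p$ be the only positive atomic fact, and in particular letting no node satisfy $q$. Then $k_{0}\vDash p$ by construction, and $k_{0}\vDash p\!\rightarrow\! q$ holds \emph{vacuously}: any $k'\!\in\!R[k_{0}]$ is different from $k_{0}$ (because $k_{0}\!\notin\!R[k_{0}]$), hence $k'\nvDash p$, so the implication is true at $k_{0}$. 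Yet $k_{0}\nvDash q$ by construction, so the premises of $(M\!P)$ hold at $k_{0}$ while the conclusion fails, showing $(M\!P)$ is not satisfied in this model and therefore not in the frame.

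The only delicate point in the argument is making sure that the valuation at $k_{0}$ can be chosen independently of the valuations on $R[k_{0}]$; this is precisely what non-reflexivity at $k_{0}$ buys us, since $k_{0}\!\notin\!R[k_{0}]$ means that what holds at $k_{0}$ does not interfere with the universal quantifier ranging over $R[k_{0}]$ in the implication clause. No induction or appeal to the earlier propositions (\ref{lem-per}, \ref{lem-trans}) is needed, and persistency plays no role since we are evaluating at the single node $k_{0}$.
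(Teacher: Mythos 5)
Your proof is correct and follows essentially the same route as the paper: reflexivity gives $(M\!P)$ by instantiating the implication clause at $k'=k$, and non-reflexivity is refuted by a two-atom countermodel at a witness $k_0$ with $k_0\notin R[k_0]$. The only (harmless) difference is that your valuation makes $p\rightarrow q$ true at $k_0$ \emph{vacuously} (no successor satisfies $p$), whereas the paper makes it true by putting $p$ everywhere and $q$ on all of $R[k_0]$; both exploit $k_0\notin R[k_0]$ in the same way.
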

\begin{proof}
If $R$ is reflexive then for any $k\!\in\!K$ we have  $k\!\vDash\!\varphi,\;  k\!\vDash\!\varphi\!\rightarrow\!\psi\Longrightarrow k\!\vDash\!\psi$ just because  $kRk$. Now, if the relation $R$ is not reflexive then
there exists some  $k\!\in\!K$ such that $k\!\!\!\not\!R k$. For atoms $p,q$ let $\vDash$ be $\big(K\times\{p\}\big)\cup\big(R[k]\times\{q\}\big)$. Then $k\vDash p$ and $k\vDash p\rightarrow q$ because for any $k'$ with $kRk'$ we have $k'\vDash q$. But $k\nvDash q$ because $k\!\not\in\!R[k]$.  So, the rule $(M\!P)$ is not satisfied at node $k$.
\hfill $\maltese\hspace{-0.805em}\Box$ \end{proof}

The axiom $(A_1)$ is satisfied in every transitive Kripke frame. The following theorem characterizes exactly the frames in which this axiom is satisfied.

\begin{theorem}[${\bf A_1}$ \& Transitivity]\label{transitive1}
The axiom $(A_1)$ is satisfied  in a Kripke frame $\langle K,R\rangle$ if and only if
$R|_{R^{+}[k]}$  is transitive for all $k\!\in\!K$.
\end{theorem}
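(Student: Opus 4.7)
The plan is to unfold the three nested implications of $(A_1)$ against the accessibility relation and argue in both directions separately.

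For the sufficiency direction, I would assume each $R|_{R^{+}[k]}$ is transitive. To verify $(A_1)$ at an arbitrary node $n_{0}$ in an arbitrary model on $\langle K,R\rangle$, I would successively assume $n_{0}\,R\,n_{1}$ with $n_{1}\vDash\varphi\rightarrow\psi$, then $n_{1}\,R\,n_{2}$ with $n_{2}\vDash\psi\rightarrow\theta$, and finally $n_{2}\,R\,n_{3}$ with $n_{3}\vDash\varphi$. Since each of $n_{1},n_{2},n_{3}$ lies in $R^{+}[n_{0}]$, the hypothesis yields $n_{1}\,R\,n_{3}$; combined with $n_{1}\vDash\varphi\rightarrow\psi$ this forces $n_{3}\vDash\psi$, and from $n_{2}\vDash\psi\rightarrow\theta$ together with $n_{2}\,R\,n_{3}$ it follows that $n_{3}\vDash\theta$, as required.

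For the necessity direction, suppose $R|_{R^{+}[k]}$ is not transitive for some $k$, so I can pick $a,b,c\in R^{+}[k]$ with $a\,R\,b$, $b\,R\,c$, yet $\neg(a\,R\,c)$. Because $a\in R^{+}[k]$, some chain $k=y_{0}\,R\,y_{1}\,R\,\cdots\,R\,y_{m}=a$ exists, and I would set $k^{*}=y_{m-1}$ (so $k^{*}=k$ when $m=1$), giving $k^{*}\,R\,a$. Taking three distinct atoms $p,q,r$ as instantiations of $\varphi,\psi,\theta$, I would define the satisfaction relation to be the singleton $\vDash\,=\{\langle c,p\rangle\}$, so that $c$ is the only node satisfying $p$ and no node satisfies $q$ or $r$. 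Then $a\vDash p\rightarrow q$ holds vacuously, since the only candidate witness $c$ does not lie in $R[a]$; similarly $b\vDash q\rightarrow r$ holds vacuously, since nothing satisfies $q$; but $b\,R\,c$ together with $c\vDash p$ and $c\nvDash r$ witnesses $b\nvDash p\rightarrow r$, so $a\nvDash(q\rightarrow r)\rightarrow(p\rightarrow r)$, and hence $k^{*}\nvDash(A_{1})$, showing that the frame does not satisfy $(A_{1})$.

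The main obstacle will be orchestrating the countermodel: one must recognise that the non-transitivity of $R|_{R^{+}[k]}$ furnishes precisely a three-step chain $a\,R\,b\,R\,c$ whose endpoints are not directly linked, and exploit this single gap by concentrating the truth of the outermost variable at the unreachable endpoint $c$ while keeping the intermediate variables globally false; this makes the two outer implications collapse vacuously at exactly the right nodes, while the innermost $\varphi\rightarrow\theta$ genuinely fails at $b$. A secondary point to check is merely that a predecessor $k^{*}$ of $a$ actually exists, which is immediate from $a\in R^{+}[k]=\bigcup_{n\geq 1}R^{n}[k]$.
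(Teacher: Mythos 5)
Your proposal is correct and follows essentially the same route as the paper: the sufficiency direction is the identical implication-chase that uses transitivity of $R|_{R^{+}[n_0]}$ to get $n_1\,R\,n_3$, and the necessity direction refutes the same instance $(p\rightarrow q)\rightarrow[(q\rightarrow r)\rightarrow(p\rightarrow r)]$ at a node one step before $a$ on a chain from $k$. The only difference is the countermodel's valuation --- the paper makes $p$ true everywhere, $q$ true on $R[k_1]$ and $r$ true on $R[k_1]\cap R[k_2]$, whereas you concentrate $p$ at the unreachable endpoint $c$ and leave $q,r$ globally false so that two of the implications hold vacuously; both choices work, and yours is slightly leaner.
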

\begin{proof} Fix a $k\!\in\!K$ and suppose that
  $R|_{R^{+}[k]}$ is transitive. We show that  $k\vDash(A_1)$, or equivalently
  $\forall k'\!\in\!R[k]\,\big(k'\vDash(\varphi\rightarrow\psi)
  \Longrightarrow k'\vDash[(\psi\rightarrow\theta)
  \rightarrow(\varphi\rightarrow\theta)]\big).$ That is equivalent to showing, for a fixed $k'\!\in\!R[k]$,  that  $\forall k''\!\in\!R[k']\,\big(k''\vDash\psi\rightarrow\theta\Longrightarrow k''\vDash\varphi\rightarrow\theta\big),$
assuming  $k'\vDash(\varphi\rightarrow\psi)$, and
this   is in turn equivalent to showing, assuming  $k''\vDash\psi\rightarrow\theta$ for a fixed $k''\!\in\!R[k']$, that     $\forall k'''\!\in\!R[k''] \big(k'''\vDash\varphi\Longrightarrow  k'''\vDash\theta\big)$.  Thus, let us assume that
(1)~ $R|_{R^{+}[k]}$ is transitive and $kRk'Rk''Rk'''$,
(2)~$k'\vDash\varphi\rightarrow\psi$,
(3)~$k''\vDash\psi\rightarrow\theta$, and
(4)~$ k'''\vDash\varphi$.
We then show that $k'''\vDash\theta$:
By (1), since  $k''',k'',k'\!\in\!R^{+}[k]$,  we have $k'Rk'''$ and so by (2) and (4) we can infer that  $k'''\vDash\psi$. Whence, (3) implies that  $k'''\vDash\theta$ holds.

So, the {\em if} part of the theorem has been proved. For the {\em only if} part, assume  that  for a node $k_0\!\in\!K$, in a Kripke frame $\langle K,R\rangle$, the relation $R|_{R^{+}[k_0]}$ is not transitive; i.e., there are   $k_1,k_2, k_3\!\in\!R^{+}[k_0]$ such that $k_1Rk_2R k_3$
but $k_1\!\!\!\!\not\!Rk_3$.  For atoms $p,q,r$ let the satisfaction relation $\vDash$ be  $\big(K\!\times\!\{p\}\big)\cup\big(R[k_1]\!\times\!\{q\}\big)
\cup\big((R[k_1]\cap R[k_2])\!\times\!\{r\}\big).$
Since we have $k_1 ,k_2, k_3\!\in\!R^{+}[k_0]$ then there are $\ell_1,\cdots,\ell_n\!\in\!K$ (for some $n\!\geqslant\!0$) such that $k_0R\ell_1R\cdots R\ell_nRk_1Rk_2Rk_3$ (when $n\!=\!0$ then $\ell_n=k_0$).
We now show that  the instance $(p\rightarrow q)\rightarrow[(q\rightarrow
 r)\rightarrow(p\rightarrow r)]$ of $(A_1)$ is not satisfied at  $\ell_n$. To see this, we  note that $k_2\nvDash p\rightarrow r$, because $k_2Rk_3, k_3\vDash p$ but $k_3\nvDash r$ for $k_3\!\!\not\in\!\!R[k_1]$,  and
 $k_2\vDash q\rightarrow r$ because  for any $k\!\in\!K$ if $k_2Rk\vDash q$   then $k\!\in\!R[k_2]$ and $k\!\in\!R[k_1]$  so $k\vDash r$. Hence, we conclude that  $k_1\nvDash (q\rightarrow\! r)\rightarrow(p\rightarrow r)$, but $k_1\vDash p\rightarrow q$ because for any $k\!\in\!K$ if $k_1Rk\vDash p$ then $k\!\in\!R[k_1]$ and so $k\vDash q$. Thus,    $\ell_n\nvDash (p\rightarrow q)\rightarrow[(q\rightarrow
 r)\rightarrow(p\rightarrow r)]$.
\hfill $\maltese\hspace{-0.805em}\Box$ \end{proof}


It can be seen that the axiom $(A_4)$ is satisfied in every reflexive Kripke model whose satisfaction relation is (formula)  persistent (with respect to the accessibility relation). Here, we give an exact characterizations for all the Kripke models  which satisfy this axiom.

\begin{theorem}[${\bf A_4}$  \&  Reflexivity+Persistency]\label{reflexivety2}
The axiom $(A_4)$ is satisfied  in every Kripke model  $\langle
K,R,\vDash\rangle$ in which for every  $k\!\in\!K$ the restricted relation
  $R|_{R^{+}[k]}$ is reflexive  and  $\vDash|_{R^{+}[k]}$ is formula persistent with respect to $R$.
  Conversely, if $(A_4)$ is satisfied in a Kripke frame then
  for all $k\!\in\!K$ the relation $R|_{R^{+}[k]}$ is reflexive and the restriction of the satisfaction relations to the sets $R^{+}[k]$ (for every $k\!\in\!K$) on those frames  should be
  formula persistent with respect to $R$.
\end{theorem}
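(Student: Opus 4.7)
The plan is to prove the two directions separately and, within the \emph{only if} direction, to establish reflexivity first so that it can be invoked when proving persistency. The \emph{if} direction is a direct unfolding of the definition of satisfaction: reflexivity supplies the detachment step that yields $\psi$, and formula persistency supplies the preservation step that yields $\psi\rightarrow\varphi$. The \emph{only if} direction requires, in each case, the construction of an explicit satisfaction relation on the given frame that makes some instance of $(A_4)$ fail at some node.

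For the \emph{if} part, I fix $k_0\!\in\!K$, take any $k'\!\in\!R[k_0]$ with $k'\vDash\varphi\,\&\,(\varphi\rightarrow\psi)$, and aim at $k'\vDash\psi\,\&\,(\psi\rightarrow\varphi)$. Since $k'\!\in\!R^{+}[k_0]$, the assumed reflexivity of $R|_{R^{+}[k_0]}$ gives $k'Rk'$, so from $k'\vDash\varphi$ and $k'\vDash\varphi\rightarrow\psi$ I infer $k'\vDash\psi$. For the second conjunct I take any $k''\!\in\!R[k']$ with $k''\vDash\psi$ and apply formula persistency on $R^{+}[k_0]$ (to $k',k''\!\in\!R^{+}[k_0]$ with $k'Rk''$ and $k'\vDash\varphi$) to conclude $k''\vDash\varphi$, as desired.

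For the reflexivity half of the \emph{only if} part, I argue contrapositively: suppose $k_1\!\in\!R^{+}[k_0]$ is not related to itself by $R$. Pick a predecessor $\ell$ of $k_1$ along some path witnessing $k_1\!\in\!R^{+}[k_0]$ (such an $\ell$ always exists, taking $\ell=k_0$ if $k_0Rk_1$ directly) and, on two atoms $p,q$, define $\vDash$ so that $p$ holds only at $k_1$ and $q$ holds exactly on $R[k_1]$. Then $k_1\vDash p$, and $k_1\vDash p\rightarrow q$ holds vacuously because no $R$-successor of $k_1$ satisfies $p$ (since $k_1\!\not\in\!R[k_1]$); but $k_1\!\not\vDash\!q$, so $k_1\!\not\vDash\!q\,\&\,(q\rightarrow p)$, and the instance $[p\,\&\,(p\rightarrow q)]\rightarrow[q\,\&\,(q\rightarrow p)]$ of $(A_4)$ is falsified at $\ell$, contradicting frame-satisfaction of $(A_4)$.

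For the persistency half I suppose some model on the frame and nodes $k_1,k_2\!\in\!R^{+}[k_0]$ with $k_1Rk_2$, $k_1\vDash\varphi$, and $k_2\!\not\vDash\!\varphi$ for some formula $\varphi$. Taking $\psi=\overline{\top\!\!\!\!\!\!\top}$, I get $k_1\vDash\varphi\,\&\,(\varphi\rightarrow\psi)$ trivially, while $k_1\!\not\vDash\!\psi\rightarrow\varphi$ because $k_2\!\in\!R[k_1]$ satisfies $\psi$ but not $\varphi$; using the reflexivity already proved, $k_1Rk_1$, and so the corresponding instance of $(A_4)$ fails at $k_1$ itself. The main obstacle I anticipate is keeping the two halves of the \emph{only if} direction in the right logical order, since the persistency counterexample genuinely relies on the just-proved reflexivity of $R|_{R^{+}[k_0]}$ in order to locate a node at which $(A_4)$ fails, rather than having to manufacture a suitable predecessor.
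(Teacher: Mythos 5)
Your proposal is correct and follows essentially the same route as the paper: the same detachment-plus-persistency argument for the \emph{if} direction, the same ``$p$ true only at $k_1$'' valuation for the reflexivity converse, and the same $\overline{\top\!\!\!\!\!\!\top}$-instance $(\varphi\,\&\,[\varphi\rightarrow \overline{\top\!\!\!\!\!\!\top}])\rightarrow(\overline{\top\!\!\!\!\!\!\top}\,\&\,[\overline{\top\!\!\!\!\!\!\top}\rightarrow\varphi])$ for the persistency converse. The only (harmless) deviation is that you locate the failure of the latter instance at $k_1$ itself via the just-proved reflexivity $k_1Rk_1$, whereas the paper places it at a predecessor $\ell_n$ of $k_1$ along a path from $k_0$, avoiding that dependency.
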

\begin{proof}
For a fixed Kripke model $\langle K,R,\vDash \rangle$ and fixed node $k\!\in\!K$, suppose that $R|_{R^{+}[k]}$ is reflexive and that $\vDash|_{R^{+}[k]}$ has the formula persistency  property. We show that $k\vDash(A_4)$ or equivalently
$\forall k'\!\in\!R[k]\big(k'\vDash\varphi\&[\varphi\rightarrow\psi]
\Longrightarrow
k'\vDash\psi\&[\psi\rightarrow\varphi]\big).$ Thus, it suffices to show that
$k'\vDash\psi$ and $\forall k''\!\in\!R[k']\big(k''\vDash\psi\Longrightarrow k''\vDash\varphi\big),$ if $kRk'\vDash\varphi\&[\varphi\rightarrow\psi].$ Whence, we
assume that
(1)~$R|_{R^{+}[k]}$ is reflexive and $kRk'Rk''$,
 (2)~$k'\vDash\varphi\&[\varphi\rightarrow\psi]$,
(3)~$k''\vDash\psi$, and (4)~$\vDash|_{R^{+}[k]}$ is formula persistent;
 and show that $k'\vDash\psi$ and $k''\vDash\varphi.$
By (2)  we have (5)~$k'\vDash\varphi$ and
(6)~$k'\vDash\varphi\rightarrow\psi$. So, by  (4) and (1) we also have
$k''\vDash\varphi$. By (1) again, we have $k' R k'$ which by (5) and (6) implies that   $k'\vDash\psi$ holds.

Now,  we suppose that the axiom $(A_4)$ is satisfied in a Kripke frame $\langle K,R \rangle$, and show that for any $k\!\in\!K$ the relation $R|_{R^{+}[k]}$ is reflexive.  If $R|_{R^{+}[k_0]}$ is not reflexive for some $k_0\!\in\!K$, then there are $\ell_1,\cdots,\ell_n\!\in\!K$ ($n\!\geqslant\!0$) such that $k_0R\ell_1R\cdots R\ell_nRk_1\!\!\!\not\!Rk_1$. Define the  satisfaction relation $\vDash$ to be  $\langle k_1,p\rangle$ for some atom $p$. We show that under this satisfaction relation the instance
$(p\&[p\rightarrow q])\rightarrow(q\&[q\rightarrow p])$ of
$(A_4)$ is not satisfied at $\ell_n$. That is because $k_1\vDash p\&(p\rightarrow q)$ by definition and the fact that for no $k\!\in\!R[k_1]$ we can have $k\vDash p$ (by $k_1\!\!\!\not\!Rk_1$). On the other hand by definition $k_1\nvDash q$ and so $k_1\nvDash q\&(q\rightarrow p)$.

Next, if  $\vDash|_{R^{+}[k_0]}$ is not formula persistent with respect to $R$ in a Kripke model $\langle K,R,\vDash \rangle$ and node $k_0\!\in\!K$, then there are two nodes $k_1,k_2\!\in\!R^+[k_0]$ and a formula $\varphi$   such that $k_1Rk_2$ and $k_1\vDash\varphi$ but $k_2\nvDash\varphi$. Also there are some $\ell_1,\cdots,\ell_n\!\in\!K$ ($n\!\geqslant\!0$) such that $k_0R\ell_1R\cdots R\ell_nRk_1$.
We show that the instance $(\varphi\&[\varphi\rightarrow \overline{\top\!\!\!\!\!\!\top}])
\rightarrow(\overline{\top\!\!\!\!\!\!\top}
\&[\overline{\top\!\!\!\!\!\!\top}\rightarrow\varphi])$  of $(A_4)$ (see Remark~\ref{remark}  for the definition of $\overline{\top\!\!\!\!\!\!\top}$) is not satisfied in $\langle
K,R,\vDash\rangle$ at $\ell_n$:  Because, at  $k_1$ (for which $\ell_nRk_1$ holds) we have $k_1\vDash\varphi\&[\varphi\rightarrow \overline{\top\!\!\!\!\!\!\top}]$ (since $k\vDash\overline{\top\!\!\!\!\!\!\top}$  holds for any $k$) but $k_1\nvDash\overline{\top\!\!\!\!\!\!\top}\rightarrow\varphi$ since for $k_2\!\in\!R[k_1]$ we have $k_2\nvDash\varphi$ (and of course $k_2\vDash\overline{\top\!\!\!\!\!\!\top}$).
\hfill $\maltese\hspace{-0.805em}\Box$ \end{proof}

The axiom $(A_5a)$, too, is satisfied in every reflexive frame. Here is an exact characterization.

 \begin{theorem}[${\bf A_5a}$ \& Reflexivity]\label{reflexive2}
The axiom $(A_5a)$  is satisfied  in a Kripke frame $\langle
K,R\rangle$ if and only if $R|_{R^{2}[k]}$ is reflexive for all
$k\!\in\!K$.
\end{theorem}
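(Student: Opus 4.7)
The plan is to follow the template of the proofs for Theorems~\ref{transitive1} and~\ref{reflexivety2}. First I would unfold what $k\vDash(A_{5a})$ means at a fixed node $k$: it is equivalent to saying that for every $k'\!\in\!R[k]$ with $k'\vDash\varphi\rightarrow(\psi\rightarrow\theta)$ and every $k''\!\in\!R[k']$ with $k''\vDash\varphi\&\psi$, one must have $k''\vDash\theta$. The crucial observation is that such a $k''$ automatically lies in $R^{2}[k]$, which is precisely where the hypothesized reflexivity will be invoked.

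For the \emph{if} direction, assume $R|_{R^{2}[k]}$ is reflexive. From $k''\vDash\varphi$ and $k'\vDash\varphi\rightarrow(\psi\rightarrow\theta)$ applied via $k'Rk''$, one obtains $k''\vDash\psi\rightarrow\theta$. To pass from $k''\vDash\psi$ to $k''\vDash\theta$ at the node $k''$ itself, one needs $k''Rk''$, and this is precisely the reflexivity condition applied to $k''\!\in\!R^{2}[k]$.

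For the \emph{only if} direction, I would reason by contrapositive. Suppose $R|_{R^{2}[k_0]}$ fails to be reflexive, so there is $k_1\!\in\!R^{2}[k_0]$ with $k_1\!\!\!\not\!R k_1$; fix a witness $\ell\!\in\!K$ with $k_0 R \ell R k_1$. For atoms $p,q,r$, let the satisfaction relation be $\{\langle k_1,p\rangle,\langle k_1,q\rangle\}$, so that $p$ and $q$ hold only at $k_1$ and $r$ holds nowhere. I claim that the instance $[p\rightarrow(q\rightarrow r)]\rightarrow[(p\&q)\rightarrow r]$ of $(A_{5a})$ fails at $\ell$. Indeed, $\ell\vDash p\rightarrow(q\rightarrow r)$: the only $R$-successor of $\ell$ at which $p$ holds is $k_1$, and at $k_1$ the formula $q\rightarrow r$ is vacuously true, since every $k'''\!\in\!R[k_1]$ is distinct from $k_1$ (because $k_1\!\!\!\not\!R k_1$), hence $k'''\nvDash q$. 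On the other hand, $\ell\nvDash(p\&q)\rightarrow r$ because $k_1\!\in\!R[\ell]$ with $k_1\vDash p\&q$ but $k_1\nvDash r$.

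The main delicate point is the counterexample construction: the interpretations of $p$ and $q$ must be arranged so that the antecedent holds at $\ell$ \emph{only} through the vacuity of the inner implication $q\rightarrow r$ at $k_1$, and this vacuity is exactly what the failure $k_1\!\!\!\not\!R k_1$ makes available. Everything else is a routine unfolding of the satisfaction clauses, parallel to the arguments already carried out for $(A_1)$ and $(A_4)$.
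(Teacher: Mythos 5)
Your proof is correct and follows essentially the same route as the paper's: the same use of $k''Rk''$ for $k''\!\in\!R^{2}[k]$ in the forward direction, and the same countermodel $\vDash\,=\{k_1\}\times\{p,q\}$ with the same vacuity argument in the converse. The only slip is terminological: what you actually establish is that $\ell$ satisfies the antecedent $p\rightarrow(q\rightarrow r)$ but not the consequent $(p\& q)\rightarrow r$, so the instance of $(A_5a)$ fails at $k_0$ (which has $\ell$ as an $R$-successor), not literally at $\ell$ itself.
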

\begin{proof}
Fix a $k\!\in\!K$ in a Kripke frame $\langle K,R \rangle$ for which $R|_{R^2[k]}$ is reflexive. For showing $k\vDash(A_5a)$, we  show that $\forall k'\!\in\!R[k] \big(k'\vDash\varphi\rightarrow(\psi\rightarrow\theta)
\Longrightarrow
k'\vDash(\varphi\&\psi)\rightarrow\theta\big)$, which is equivalent to showing $\forall k''\!\in\!R[k']\,\big(k''\vDash(\varphi\&\psi)\Longrightarrow
k''\vDash\theta\big),$ for some fixed $k'\!\in\!R[k]$ with $k'\vDash\varphi\rightarrow(\psi\rightarrow\theta).$  Whence, we assume that (1)~the relation $R|_{R^{2}[k]}$ is reflexive,  (2)~$k'\vDash\varphi\rightarrow(\psi\rightarrow\theta)$, (3)~$k''\vDash\varphi\&\psi$ and (4)~$k R k' R k''$, and show that $k''\vDash \theta$: By (3) we have (5)~$k''\vDash\psi$; the assumptions (2) and (4) imply that
(6)~$k''\vDash\psi\rightarrow\theta$.
By the reflexivity of $R|_{R^2[k]}$ and $k''\!\in\!R^2[k]$ we have $k''Rk''$, and so it follows from (5) and (6) that  $k''\vDash\theta$ holds. This proves the {\em if} part of the theorem.

For the converse, the {\em only if} part, assume that for a node $k_0\!\in\!K$ in a Kripke frame
$\langle K,R \rangle$, the restricted relation $R|_{R^{2}[k_0]}$ is not reflexive; i.e., there is $k\!\in\!{R^{2}[k_0]}$ such that
$k\!\!\!\not\!R k$. Let us note that for some $k'$ we have $k_0Rk'Rk$. Let $\vDash$ be $\big(\{k\}\times\{p,q\}\big)$ for atoms $p,q,r$. We show that the instance $[p\rightarrow(q\rightarrow r)]\rightarrow[(p\& q)\rightarrow
r]$ of $(A_5a)$ is not satisfied at $k_0$: we have $k'\nvDash(p\& q)\rightarrow r$ because
at $k\!\in\!R[k']$ we have  $k\vDash p\& q$  but $k\nvDash r$. On the other hand $k'\vDash p\rightarrow(q\rightarrow r)$
because for any
$\ell\!\in\!R[k']$  if $\ell\vDash p$ then $\ell=k$ but then $k\vDash q\rightarrow r$ since  no node in $R[k]$ satisfies $q$ (note that $k\!\not\in\!R[k]$). Concluding, it follows that $k_0\nvDash[p\rightarrow(q\rightarrow r)]\rightarrow[(p\& q)\rightarrow r]$.
\hfill $\maltese\hspace{-0.805em}\Box$ \end{proof}

Similarly, we provide an exact characterizations for Kripke models which satisfy the axiom $(A_5b)$.

\begin{theorem}[${\bf A_5b}$ \&  Transitivity+Persistency]\label{transitive2}
The axiom $(A_5b)$ is satisfied in every Kripke frame $\langle K,R \rangle$     in which  for all $k\!\in\!K$ the relation $R|_{R^{+}[k]}$  is transitive  and $\vDash|_{R^{++}[k]}$ is formula persistent with respect to $R$.
Conversely, if
 $(A_5b)$ is satisfied in a Kripke frame  $\langle K,R \rangle$     then  for all $k\!\in\!K$ the relation $R|_{R^{+}[k]}$  is transitive  and the restriction of the satisfaction relations to the sets $R^{++}[k]$ (for every $k\!\in\!K$) on that  frame  should be   formula  persistent with respect to $R$.
\end{theorem}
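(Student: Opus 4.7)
The plan is to handle the forward direction and the two clauses of the converse (transitivity and persistency) separately, following the template of Theorems~\ref{transitive1} and~\ref{reflexivety2}.

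For the forward direction, fix $k\in K$ and assume $R|_{R^+[k]}$ is transitive and $\vDash|_{R^{++}[k]}$ is formula persistent; I would prove $k\vDash(A_5b)$ by unfolding the satisfaction clause for implication three times. Concretely, assume $kRk'Rk''Rk'''$, $k'\vDash(\varphi\&\psi)\rightarrow\theta$, $k''\vDash\varphi$, $k'''\vDash\psi$, and aim at $k'''\vDash\theta$. Observe that $k',k'',k'''\in R^+[k]$ and that $k'',k'''\in R^{++}[k]$. Transitivity of $R|_{R^+[k]}$ on the chain $k'Rk''Rk'''$ gives $k'Rk'''$, while formula persistency on the edge $k''Rk'''$ inside $R^{++}[k]$ propagates $\varphi$, yielding $k'''\vDash\varphi$. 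Hence $k'''\vDash\varphi\&\psi$, and combining with $k'\vDash(\varphi\&\psi)\rightarrow\theta$ and $k'Rk'''$ produces $k'''\vDash\theta$.

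For the transitivity clause of the converse, suppose $R|_{R^+[k_0]}$ is not transitive: pick $k_1,k_2,k_3\in R^+[k_0]$ with $k_1Rk_2Rk_3$ but $k_1\!\!\!\!\not\!Rk_3$ (hence $k_2\neq k_3$), together with a path $k_0R\ell_1R\cdots R\ell_nRk_1$ (taking $\ell_n:=k_0$ when $n=0$). Choose atoms $p,q,r$ and set $\vDash:=\{\langle k_2,p\rangle,\langle k_3,q\rangle\}$. Then the instance $[(p\&q)\rightarrow r]\rightarrow[p\rightarrow(q\rightarrow r)]$ of $(A_5b)$ fails at $\ell_n$: the witness $k_1\in R[\ell_n]$ satisfies the antecedent vacuously (no node in $R[k_1]$ satisfies both $p$ and $q$, since $k_2\neq k_3$), yet refutes the consequent through the chain $k_1Rk_2\vDash p$, $k_2Rk_3\vDash q$, $k_3\nvDash r$.

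For the persistency clause, suppose $\vDash|_{R^{++}[k_0]}$ is not formula persistent: fix $k_1,k_2\in R^{++}[k_0]$ and a formula $\varphi_0$ with $k_1Rk_2$, $k_1\vDash\varphi_0$, $k_2\nvDash\varphi_0$. Since $k_1\in R^{++}[k_0]$, pick a path $k_0Rz_1R\cdots Rz_{m-1}Rk_1$ of length $m\geq 2$, which supplies $z_{m-1}$ as the immediate predecessor of $k_1$ and $z_{m-2}$ (understood as $k_0$ when $m=2$) as its predecessor. I would then use the $(A_5b)$-instance $[(\varphi_0\&\overline{\top\!\!\!\!\!\!\top})\rightarrow\varphi_0]\rightarrow[\varphi_0\rightarrow(\overline{\top\!\!\!\!\!\!\top}\rightarrow\varphi_0)]$, obtained via $\varphi:=\varphi_0$, $\psi:=\overline{\top\!\!\!\!\!\!\top}$, $\theta:=\varphi_0$. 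Its antecedent holds at every node (it is a universal instance of $(A_2)$, see Proposition~\ref{trivial}), while at $z_{m-1}$ the consequent fails, witnessed by $k_1\in R[z_{m-1}]$: indeed $k_1\vDash\varphi_0$, yet $k_1\nvDash\overline{\top\!\!\!\!\!\!\top}\rightarrow\varphi_0$ because $k_2\in R[k_1]$ satisfies $\overline{\top\!\!\!\!\!\!\top}$ but not $\varphi_0$. Hence the whole instance fails at $z_{m-2}$. The delicate point is this substitution choice: slotting $\overline{\top\!\!\!\!\!\!\top}$ into the $\psi$-position renders the antecedent a tautology and concentrates all semantic weight into the nested chain $\varphi_0\rightarrow(\overline{\top\!\!\!\!\!\!\top}\rightarrow\varphi_0)$, whose failure literally encodes the broken persistency edge.
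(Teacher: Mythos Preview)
Your proof is correct and follows essentially the same three-part structure as the paper (forward direction via unfolding, transitivity counterexample via atoms at $k_2,k_3$, persistency counterexample via the $\overline{\top\!\!\!\!\!\!\top}$-instance). The only deviation is that in the transitivity clause you drop the paper's extra component $R[k_1]\times\{r\}$ from the satisfaction relation and argue the antecedent $(p\&q)\rightarrow r$ vacuously via $k_2\neq k_3$; this simplification is valid and the paper's added $r$-clause is in fact redundant.
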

\begin{proof}
For a Kripke model $\langle K,R,\vDash\rangle$ and a node $k\!\in\!K$ of it, if $R|_{R^{+}[k]}$  is transitive and $\vDash|_{R^{++}[k]}$ is formula persistent with respect to $R$, then we show that $k\vDash(A_5b)$ which is equivalent to $\forall k'\!\in\!R[k]\big(k'\vDash(\varphi\&\psi)\rightarrow\theta
\Longrightarrow
k'\vDash\varphi\rightarrow(\psi\rightarrow\theta)\big)$  or equivalent to $\forall k''\!\in\!R[k']\,\big(k''\vDash\varphi\Longrightarrow
k''\vDash\psi\rightarrow\theta\big),$  under the assumption $kRk'\vDash(\varphi\&\psi)\rightarrow
 \theta $. This, in turn, is equivalent to   $\forall
k'''\!\in\!R[k'']\,\big(k'''\vDash\psi\Longrightarrow
k'''\vDash\theta\big)$ assuming that $k'Rk''\vDash\varphi$. Whence, we assume that (1)~the relation $\vDash|_{R^{++}[k]}$ is atom persistent with respect to $R$,
(2)~the restricted relation $R|_{R^{+}[k]}$ is transitive and we have that $kRk'Rk''Rk'''$,  (3)~$k'\vDash(\varphi\&\psi)\rightarrow\theta$,
(4)~$k''\vDash\varphi$ and (5)~$k'''\vDash\psi$; and show that $k'''\vDash\theta$: From (1), (4) and (5), noting that $k'',k'''\!\in\!R^{++}[k]$,   we have (6)~$k'''\vDash\varphi\&\psi$. Then from   (2) we have $k'Rk'''$ and so (3) and (6)  imply that $k'''\vDash\theta$ holds.

Now, if for a node $k_0\!\in\!K$
in a Kripke frame $\langle K,R\rangle$  the restricted relation
$R|_{R^{+}[k_0]}$ is not transitive, then there are $k_1, k_2, k_3\!\in\!R^{+}[k_0]$ such that $k_1Rk_2Rk_3$, but $k_1\!\!\!\not\!R k_3$. Also,  there are $\ell_1,\cdots,\ell_n\!\in\!K$ ($n\!\geqslant\!0$) such that $k_0R\ell_1R\cdots R\ell_nRk_1$. Let the satisfaction relation $\vDash$ be defined as $\big(R[k_1]\times \{r\}\big)\cup\{ \langle k_2, p\rangle\}\cup\{\langle k_3,q\rangle\}$ for some atoms $p,q,r$. Now we show that the instance $[(p\& q)\rightarrow
 r]\rightarrow[p\rightarrow(q\rightarrow r)]$ of $(A_5b)$ is not satisfied at $\ell_n$: we have $k_1\vDash p\& q\rightarrow r$ because for no $k\!\in\!R[k_1]$ can we have $k\vDash p\& q$. Also, $k_2\nvDash q\rightarrow r$ because at $k_3\!\in\!R[k_2]$ we have  $k_3\vDash q$ but $k_3\nvDash r$ (notice that $k_3\!\not\in\!R[k_1]$), therefore,
 $k_1\nvDash p\rightarrow(q\rightarrow r)$ because at $k_2\!\in\!R[k_1]$ we have $k_2\vDash p$ but $k_2\nvDash q\rightarrow r$. Now that we have $k_1\vDash p\& q\rightarrow r$ and $k_1\nvDash p\rightarrow(q\rightarrow r)$ we therefore infer the desired conclusion  $\ell_n\nvDash[(p\& q)\rightarrow
 r]\rightarrow[p\rightarrow(q\rightarrow r)]$.

Finally, if for a node $k_0\!\in\!K$
in a Kripke model $\langle K,R,\vDash \rangle$  the  restricted satisfaction relation
$\vDash|_{R^{++}[k_0]}$ is not formula persistent (with respect to $R$), then there exist two nodes $k_1,k_2\!\in\!R^{++}[k_0]$ and a formula $\varphi$ such that $k_1Rk_2$, $k_1\vDash\varphi$ and $k_2\nvDash\varphi$. Also, by Definition~\ref{def-rtclosed}, there   are $\ell_1,\cdots,\ell_n\!\in\!K$ ($n\!\geqslant\!1$) such that $k_0R\ell_1R\cdots R\ell_nRk_1$.
We show that the instance  $[(\varphi\& \overline{\top\!\!\!\!\!\!\top})\rightarrow\varphi]
\rightarrow[\varphi\rightarrow(\overline{\top\!\!\!\!\!\!\top}
 \rightarrow\varphi)]$  of $(A_5b)$ (see Remark~\ref{remark} for the definition of $\overline{\top\!\!\!\!\!\!\top}$) is not satisfied in this model at $\ell_{n-1}$; let us recall that if $n\!=\!1$ then $\ell_{n-1}\!=\!k_0$. To see this, firstly, we note that
for $\ell_n\!\in\!R[\ell_{n-1}]$ we have $\ell_n\vDash(\varphi\& \overline{\top\!\!\!\!\!\!\top})\rightarrow\varphi$ (indeed  $k\vDash(\varphi\& \overline{\top\!\!\!\!\!\!\top})\rightarrow\varphi$ holds for any node $k$). Secondly, $\ell_n\nvDash\varphi\rightarrow(\overline{\top\!\!\!\!\!\!\top}
 \rightarrow\varphi)$ because for $k_1\!\in\!R[\ell_n]$ we have $k_1\vDash\varphi$ but $k_1\nvDash \overline{\top\!\!\!\!\!\!\top}
 \rightarrow\varphi $ since at $k_2\!\in\!R[k_1]$ we have (of course $k_2\vDash\overline{\top\!\!\!\!\!\!\top}$ and also) $k_2\nvDash\varphi$.
\hfill $\maltese\hspace{-0.805em}\Box$ \end{proof}

Let us pause for a moment and see where we have got from these results so far. By Proposition~\ref{trivial} the axioms $(A_2)$, $(A_3)$ and $(A_7)$ (and also G\"odel's Axiom $\varphi\rightarrow\varphi\&\varphi$) are satisfied in {\em all} Kripke frames. By Theorem~\ref{reflexive1} only {\em reflexive} Kripke frames can satisfy the $(M\!P)$ rule. By Theorem~\ref{transitive1} the axiom $(A_1)$ can be satisfied in a  Kripke frame $\langle K,R \rangle$ if and only if $R|_{R^+[k]}$ is {\em transitive}, for all $k\!\in\!K$. So,  suitable Kripke frames for fuzzy logics should be {\em reflexive and transitive} by Lemma~\ref{lem-trans}. Moreover, the satisfaction relations on those (reflexive and transitive) Kripke frames should be (formula) {\em persistent} by Theorem~\ref{reflexivety2}, since Kripke models on those frames should satisfy the axiom $(A_4)$ as well; Theorem~\ref{reflexive2} (for the axiom $A_5a$) and Theorem~\ref{transitive2}  (for the axiom $A_5b$)  confirm this even more. So, one should necessarily  consider {\em reflexive, transitive and persistent} Kripke models for fuzzy logics.

Unfortunately, we have been unable to find a good characterizations for Kripke frame/models which satisfy the axiom $(A_6)$. One candidate for a class of Kripke frames which satisfy this axiom is the class of connected (Definition~\ref{def-conn}) Kripke frames. Indeed,  $(A_6)$ is satisfied in every ({\em persistent} and) {\em connected} Kripke model (see Theorem~\ref{connected} below). But the converse does not hold: the Kripke model $\langle {\big\{}\emptyset,\{a\},\{b\}{\big\}},\subseteq,\emptyset \rangle$ (with the empty satisfaction relation) is reflexive, transitive and  persistent but not connected (assuming $a\neq b$); while it satisfies   $(A_6)$, and every classical tautology. Below (in Theorem~\ref{connected}) we show that if a reflexive and transitive Kripke frame satisfies $(A_6)$ with persistent satisfaction relations, then it must be connected.


Before proving Theorem~\ref{connected} let us  make a little note about the linearity axiom $(\varphi\rightarrow\psi)\vee(\psi\rightarrow\varphi)$ which, over the (propositional) Intuitionistic Logic, axiomatizes the Kripke frames whose accessibility relations are linear orders. The  logic resulted  by appending this axiom to the intuitionistic logic is called Dummett logic (see~\cite{Dummett} and the Conclusions below).

\begin{lemma}[The Connectedness  Axiom]\label{persis} The formula
 $(\varphi\rightarrow\psi)\vee(\psi\rightarrow\varphi)$ is
 satisfied in all (formula) persistent and connected Kripke models.
\end{lemma}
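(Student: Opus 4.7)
The plan is to argue by contradiction at each node, using connectedness to compare two witnesses of failure and persistency to derive a clash. First I would fix a persistent and connected Kripke model $\langle K,R,\vDash\rangle$ and an arbitrary node $k\!\in\!K$, and show that $k\vDash(\varphi\rightarrow\psi)\vee(\psi\rightarrow\varphi)$, interpreting the disjunction locally in the standard way: $k\vDash\alpha\vee\beta$ iff $k\vDash\alpha$ or $k\vDash\beta$.

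Assume, toward a contradiction, that $k\nvDash\varphi\rightarrow\psi$ and $k\nvDash\psi\rightarrow\varphi$. Unpacking the semantics of implication from Definition~\ref{def-modele}, this yields witnesses $k_1,k_2\!\in\!R[k]$ with $k_1\vDash\varphi$ and $k_1\nvDash\psi$ on the one hand, and $k_2\vDash\psi$ and $k_2\nvDash\varphi$ on the other. Note that $k_1,k_2\!\in\!R[k]\subseteq R^+[k]$, so the connectedness hypothesis (Definition~\ref{def-conn}) applies to the pair $\{k_1,k_2\}$ and gives either $k_1 R k_2$ or $k_2 R k_1$.

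Now I would finish by invoking formula persistency in each case. If $k_1 R k_2$, then from $k_1\vDash\varphi$ we conclude $k_2\vDash\varphi$, contradicting $k_2\nvDash\varphi$. If $k_2 R k_1$, then from $k_2\vDash\psi$ we conclude $k_1\vDash\psi$, contradicting $k_1\nvDash\psi$. Either way the assumption collapses, so $k$ must satisfy one of the two implications, and since $k$ was arbitrary the formula is satisfied throughout the model.

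There is no real obstacle here: the argument is a short diagram chase in the definitions, and the only mildly delicate point is remembering that $R[k]\subseteq R^+[k]$ so that connectedness (which is stated only on $R^+[k]$) is available to compare the two witnesses. The converse (that connectedness and persistency are \emph{necessary}) is not being asserted in this lemma and would require the harder construction that presumably appears in the theorem following it.
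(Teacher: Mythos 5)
Your proof is correct and follows essentially the same route as the paper's: negate the disjunction at an arbitrary node, extract the two witnessing successors in $R[k]\subseteq R^+[k]$, apply connectedness to relate them, and use formula persistency to derive a contradiction in either case. No differences worth noting.
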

\begin{proof}
For formulas $\varphi,\psi$,
 if $k\nvDash(\varphi\rightarrow\psi)\vee(\psi\rightarrow\varphi)$ then there exist $k',k''\!\in\!R[k]$ such that $k'\vDash\varphi$
 but $k'\nvDash\psi$, and $k''\vDash\psi$ but $k''\nvDash\varphi$. By connectedness (and $k',k''\!\in\!R^{+}[k]$)  we have either $k' R k''$ or $k'' R k'$.
  Now, if  $k' R k''$ then from $k'\vDash \varphi$ we will have $k''\vDash\varphi$ by (formula) persistency; a contradiction (since $k''\nvDash\varphi$). Similarly, a contradiction follows from $k'' R k'$.
\hfill $\maltese\hspace{-0.805em}\Box$ \end{proof}

\begin{theorem}[$A_6$ \& Connectedness, by  \textsl{Reflexivity, Transitivity and Persistency}]\label{connected}
The axiom $(A_6)$ is satisfied in every connected and persistent Kripke model. Also, if a reflexive and transitive Kripke frame satisfies $(A_6)$ with persistent satisfaction relations, then it must be connected.
\end{theorem}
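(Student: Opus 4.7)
My plan is to prove the two directions separately: the soundness direction by combining Lemma~\ref{persis} with the semantics of $(A_6)$, and the necessity direction by a concrete counter-model construction. For the forward direction, I would fix a connected and persistent Kripke model (with reflexivity implicit from the surrounding fuzzy-logic context established by Theorems~\ref{reflexive1}--\ref{transitive2}) and show $k\vDash(A_6)$ at an arbitrary node $k$. Unfolding the two outer implications of $(A_6)$ reduces the task to: assuming $kRk'Rk''$, $k'\vDash(\varphi\to\psi)\to\theta$, and $k''\vDash(\psi\to\varphi)\to\theta$, derive $k''\vDash\theta$. Applying formula persistency along $k'Rk''$ promotes the first hypothesis to $k''\vDash(\varphi\to\psi)\to\theta$. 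Lemma~\ref{persis} applied at $k''$ then yields $k''\vDash\varphi\to\psi$ or $k''\vDash\psi\to\varphi$, and in either case reflexivity $k''Rk''$ together with the appropriate hypothesis at $k''$ forces $k''\vDash\theta$.

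For the converse, I would argue by contradiction. Suppose $\langle K,R\rangle$ is reflexive and transitive, every persistent model on it satisfies $(A_6)$, yet $R$ is not connected: there are $k_0$ and $k',k''\in R^+[k_0]$ with neither $k'Rk''$ nor $k''Rk'$. I would build a persistent valuation on three atoms $p,q,r$ by setting $V(p)=R[k']$, $V(q)=R[k'']$, and $V(r)=\{n\in K : n\vDash p\to q \textrm{ or } n\vDash q\to p\}$. Atom persistency of $V(p)$ and $V(q)$ is immediate from transitivity of $R$; persistency of $V(r)$ follows from the formula persistency of $p\to q$ and $q\to p$ (via Proposition~\ref{lem-per}), or directly from transitivity since $R[m]\subseteq R[n]$ whenever $nRm$. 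At the node $k_0$ I would then check: (i) $k'$ witnesses $k_0\nvDash p\to q$ (for $k'\vDash p$ holds because $k'Rk'$, yet $k'\nvDash q$ because $k''\not Rk'$), and symmetrically $k_0\nvDash q\to p$ via $k''$, so $k_0\notin V(r)$ and hence $k_0\nvDash r$; (ii) by the very definition of $V(r)$, any successor of $k_0$ satisfying $p\to q$ (or $q\to p$) automatically lies in $V(r)$, whence $k_0\vDash(p\to q)\to r$ and $k_0\vDash(q\to p)\to r$. Reflexivity $k_0Rk_0$ then makes $k_0$ itself a witness that $[(p\to q)\to r]\to[((q\to p)\to r)\to r]$ fails at $k_0$, contradicting the standing assumption.

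The principal obstacle I anticipate is the design of $V(r)$: it must simultaneously be atom-persistent (so the resulting model is a legitimate persistent Kripke model), large enough to force the two inner implications $(p\to q)\to r$ and $(q\to p)\to r$ to hold at $k_0$, yet lean enough to exclude $k_0$ itself so that $r$ fails there. The apparent self-reference in $V(r)$ (its membership clause is phrased via $\vDash$) is harmless because the conditions $n\vDash p\to q$ and $n\vDash q\to p$ depend only on $V(p)$ and $V(q)$, not on $V(r)$; nevertheless the upward closure of $V(r)$ must be verified carefully from transitivity of $R$, and one must keep track of which persistency hypotheses (atom vs.\ formula, and on which restriction $R^+[k]$ vs.\ $R^{++}[k]$) are available at each step.
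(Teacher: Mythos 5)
Your proof follows essentially the same route as the paper's: Lemma~\ref{persis} plus a two-case split for soundness, and a three-atom counter-model with $V(p)=R[k']$, $V(q)=R[k'']$ for necessity. Two remarks. First, in the forward direction you source the needed instance $k''Rk''$ from ``reflexivity implicit from the surrounding fuzzy-logic context''; that is not available, because the first claim of the theorem quantifies over \emph{all} connected and persistent models, with no reflexivity hypothesis. The correct source is connectedness itself: taking the two nodes in Definition~\ref{def-conn} to be equal shows that $R|_{R^{+}[k]}$ is reflexive, which is exactly what the paper invokes at this point; with that repair your case analysis goes through (your extra move of transporting $k'\vDash(\varphi\to\psi)\to\theta$ up to $k''$ by persistency is harmless but unnecessary, since in case~(i) one can use $k'Rk''$ directly). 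Second, your valuation $V(r)=\{n\in K\mid n\vDash p\to q \text{ or } n\vDash q\to p\}$ genuinely differs from the paper's $R[k]\cap\{\ell\mid \neg(\ell R k)\}$: your choice makes $(p\to q)\to r$ and $(q\to p)\to r$ hold at $k_0$ by definition, at the cost of verifying atom persistency of $V(r)$ separately (which you do correctly from transitivity, the apparent circularity being harmless since $p\to q$ and $q\to p$ depend only on $V(p)$, $V(q)$); the paper's choice makes atom persistency of $r$ a direct relational computation but then needs formula persistency of the model to show that every successor of $k$ satisfying $p\to q$ or $q\to p$ lands in $V(r)$. Both constructions are valid, and the remaining verification ($k_0\nvDash r$, reflexivity making $k_0$ its own witness) matches the paper's.
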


\begin{proof}
Suppose $\langle K,R,\vDash \rangle$ is connected and persistent. For a node $k\!\in\!K$, and formulas $\varphi,\psi,\theta$, we show that $k\vDash[(\varphi\rightarrow\psi)\rightarrow\theta]\rightarrow
[([\psi\rightarrow\varphi]\rightarrow\theta)\rightarrow\theta]$. This is equivalent to $\forall k'\!\in\! R[k]
\big(k'\vDash(\varphi\rightarrow\psi)
\rightarrow\theta\Longrightarrow
k'\vDash([\psi\rightarrow\varphi]
\rightarrow\theta)\rightarrow\theta\big)$. So,   fix a    $k'\!\in\!R[k]$  with  $k'\vDash(\varphi\rightarrow\psi)\rightarrow\theta$; we prove that $\forall k''\!\in\! R[k']\,\big(k''\vDash[(\psi\rightarrow\varphi)
\rightarrow\theta]\Longrightarrow k''\vDash\theta\big)$. Whence, we assume that
(1)~$R$  is  connected and $k R k' R k''$,
 (2)~$\vDash$ is formula persistent with respect to $R$,
(3)~$k'\vDash(\varphi\rightarrow\psi)\rightarrow\theta$, and
 (4)~$k''\vDash(\psi\rightarrow\varphi)\rightarrow\theta$;  and show that $k''\vDash\theta$. By Lemma~\ref{persis} we have either (i)~$k''\vDash\varphi\rightarrow\psi$ or (ii)~$k''\vDash\psi\rightarrow\varphi$. In case of (i), from (1) and (3) we already infer that $k''\vDash\theta$. In case of (ii), we  note that $k''Rk''$ by~(1) (and that the connectedness of $R$ implies the reflexivity of $R|_{R^{+}[k]}$) and so from (4) we can conclude that $k''\vDash\theta$.

Now, assume (for the sake of contradiction) that the Kripke frame $\langle K,R \rangle$ is reflexive and transitive but not connected. Then there must exist some nodes $k,k',k''\!\in\!K$ such that $kRk'$, $kRk''$, $k'\!\!\!\!\not\!Rk''$ and $k''\!\!\!\!\not\!Rk'$. Let us already note that then $k\!\not\in\!R[k']\cup R[k'']$ and $k'\!\not\in\!R[k'']$ also $k''\!\not\in\!R[k']$. For atoms $p,q,r$, define the satisfaction relation $\vDash$ on this frame to be
$(R[k']\times\{p\})\cup(R[k'']\times\{q\})
\cup([R[k]\cap\{\ell\!\in\!K\mid\ell\!\!\!\not\!Rk\}]\times\{r\})$.
By the transitivity of $R$, this satisfaction relation is atom persistent (since, e.g., if $\ell\vDash r$ and $\ell R\ell'$ then from $kR\ell$ and $\ell\!\!\!\not\!Rk$, and the transitivity of $R$, we have $kR\ell'$ and also $\ell'\!\!\!\!\not\!Rk$ since otherwise if $\ell'Rk$ then from $\ell R\ell'$, and the transitivity of $R$, we would have $\ell Rk$ contradiction); thus $\vDash$ is  formula persistent  (by the transitivity of $R$ and Proposition~\ref{lem-per}). We show that under this satisfaction relation    the instance $[(p\rightarrow q)\rightarrow r]\rightarrow[([q\rightarrow p]\rightarrow r)\rightarrow
r]$ of $(A_6)$ is not satisfied at $k$. We firstly  note that $k\nvDash p\rightarrow q$ (because at $k'\!\in\!R[k]$ we have $k'\vDash p$ and $k'\nvDash q$) and also $k\nvDash q\rightarrow p$ (because at $k''\!\in\!R[k]$ we have $k''\vDash q$ and $k''\nvDash p$), and secondly that $k\vDash (p\rightarrow q)\rightarrow r$ and  $k\vDash (q\rightarrow p)\rightarrow r$ (because for any $\ell\!\in\!R[k]$ if $\ell\vDash p\rightarrow q$ or $\ell\vDash q\rightarrow p$ then, by the persistency,  $\ell\!\!\!\not\!Rk$ and so $\ell\vDash r$). Finally, $k\nvDash ([q\rightarrow p]\rightarrow r)\rightarrow r$ since $k\vDash [q\rightarrow p]\rightarrow r$ but $k\nvDash r$.
\hfill $\maltese\hspace{-0.805em}\Box$ \end{proof}

Finally, the main result of the paper is the following which  follows from all the previous results:

 \begin{corollary}[Kripke Models for the Basic Fuzzy Logic]\label{theorem}
A Kripke model satisfies the axioms (and the rule) of the Basic Fuzzy
Logic   if and only if it is reflexive, transitive, and connected, and the satisfaction relation is (formula) persistent with respect to the accessibility relation.
\hfill $\maltese\hspace{-0.805em}\Box$
\end{corollary}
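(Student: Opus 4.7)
The plan is to obtain this corollary as a direct synthesis of all the characterization results already proved in this section, combined with Proposition~\ref{trivial} and Lemma~\ref{lem-trans}. Each axiom of Basic Fuzzy Logic together with Modus Ponens has been matched with an exact structural condition, and I want to show that these conditions jointly collapse to reflexivity, transitivity, connectedness, and formula persistency.

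First I would handle the ``only if'' direction by peeling off one property at a time from a model that satisfies the entire logic. Theorem~\ref{reflexive1} applied to $(M\!P)$ delivers reflexivity of $R$; Theorem~\ref{transitive1} applied to $(A_1)$ delivers transitivity of $R|_{R^{+}[k]}$ for every $k$, which together with reflexivity is promoted to full transitivity of $R$ via Lemma~\ref{lem-trans}; Theorem~\ref{reflexivety2} applied to $(A_4)$ delivers formula persistency of $\vDash|_{R^{+}[k]}$ for every $k$, and since reflexivity places every $k$ inside $R^{+}[k]$, these local persistencies glue into a single global formula persistency of $\vDash$. With reflexivity, transitivity, and persistency now in hand, Theorem~\ref{connected} applied to $(A_6)$ finally delivers connectedness.

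The ``if'' direction proceeds by verifying each axiom and the rule against the appropriate theorem. Proposition~\ref{trivial} disposes of $(A_2)$, $(A_3)$, $(A_7)$; Theorem~\ref{reflexive1} handles $(M\!P)$; Theorem~\ref{transitive1} handles $(A_1)$, since $R|_{R^{+}[k]}$ inherits transitivity from $R$; Theorem~\ref{reflexive2} handles $(A_5a)$, since $R|_{R^{2}[k]}$ inherits reflexivity; Theorems~\ref{reflexivety2} and~\ref{transitive2} handle $(A_4)$ and $(A_5b)$, because global reflexivity, transitivity, and persistency obviously restrict to any $R^{+}[k]$ or $R^{++}[k]$; and Theorem~\ref{connected} handles $(A_6)$.

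The only point that requires minor care, rather than being a genuine obstacle, is the translation between the ``local'' restricted conditions stated in the source theorems and the ``global'' reflexivity, transitivity, and persistency in the corollary. Global versions trivially restrict, which settles the ``if'' direction; in the ``only if'' direction, it is full reflexivity of $R$ that allows me to assemble the family of local persistencies coming from Theorem~\ref{reflexivety2} into the single global persistency statement, and no further work is needed.
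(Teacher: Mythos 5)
Your synthesis is exactly the paper's (implicit) argument: the corollary is stated with no proof beyond ``follows from all the previous results,'' and your assembly --- $(M\!P)$ for reflexivity via Theorem~\ref{reflexive1}, $(A_1)$ plus Lemma~\ref{lem-trans} for global transitivity, $(A_4)$ for persistency glued from the local $R^{+}[k]$ versions using reflexivity, $(A_6)$ for connectedness via Theorem~\ref{connected}, and the axiom-by-axiom check in the converse direction --- is precisely the intended reading. The one imprecision you share with the paper's own statement (rather than introduce) is that the ``only if'' halves of the cited theorems are frame-level results whose counterexamples choose a particular satisfaction relation, so the corollary is really about frames with persistent satisfaction relations; the paper's own example $\langle\{\emptyset,\{a\},\{b\}\},\subseteq,\emptyset\rangle$ shows that a single reflexive, transitive, persistent but non-connected \emph{model} can still satisfy all of BL.
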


This can indeed be seen as a negative result in the theory of Kripke models, since it shows that no class of Kripke frames can axiomatize exactly BL or the fuzzy logics that do not contain G\"odel logic. But it has also some positive sides discussed in the next section.


\section{Conclusions}

G\"odel Fuzzy Logic is axiomatized as BL plus the axiom
$\varphi\rightarrow(\varphi\&\varphi)$ of idempotence of
conjunction (cf.~\cite{Bendova}). Dummett~\cite{Dummett} showed
that  this logic can be completely axiomatized by the axioms of
intuitionistic logic plus the axiom
$(\varphi\rightarrow\psi)\vee(\psi\rightarrow\varphi)$. Indeed,
the G\"odel--Dummett Logic is sound and strongly complete with
respect to reflexive, transitive,   connected and persistent   Kripke models. In
Corollary~\ref{theorem}, we showed that the only class of Kripke
models  which could be sound and (strongly) complete for a logic
containing BL must contain the class of reflexive, transitive,
connected and persistent  Kripke models. In the other words, any logic that contains
BL and is axiomatizing a class of Kripke frames/models must also contain the
G\"odel--Dummett Logic (cf. Proposition~\ref{trivial}). So, a
Kripke-Model-Theoretic characterization of G\"odel Fuzzy Logic is
that \emph{it is the smallest fuzzy logic containing the Basic Fuzzy  Logic
which is sound and complete with respect to a class of Kripke
frames/models}. Also, the class of reflexive, transitive, connected and persistent
Kripke models is the smallest class that can be axiomatized by a
propositional fuzzy logic.

%
%



\end{document}